\newcommand\brho{\operatorname{\boldsymbol{\rho}}}
\newtheorem{theorem}{Theorem}[section]
\newtheorem{corollary}[theorem]{Corollary}
\newtheorem{remark}[theorem]{Remark}
\newcommand\dg{\operatorname{\textup{{\fontfamily{ptm}\selectfont deg}}}}
\newcommand\edg{\operatorname{\textup{{\fontfamily{ptm}\selectfont e}}}}
\newcommand{\longeq}{\scalebox{3}[1]{=}}
      \def\@setcopyright{}
      \def\serieslogo@{}
\begin{document}
   \author{Amin  Bahmanian}
   \address{Department of Mathematics,
  Illinois State University, Normal, IL USA 61790-4520}

\title[On Factors with Prescribed Degrees in Bipartite Graphs] {On Factors with Prescribed Degrees in Bipartite Graphs}

\begin{abstract} 
We establish a new criterion for a bigraph to have a subgraph with prescribed degree conditions. We show that the bigraph $G[X,Y]$ has a spanning subgraph $F$ such that $g(x)\leq \dg_F(x) \leq f(x)$ for $x\in X$ and  $\dg_F(y) \leq f(y)$ for $y\in Y$ if and only if  $\sum\nolimits_{b\in B} f(b)\geq  \sum\nolimits_{a\in A} \max \big\{0, g(a) - \dg_{G-B}(a)\big\}$ for  $A\subseteq X, B\subseteq Y$. Using Folkman-Fulkerson's Theorem, Cymer and Kano found a different criterion for the existence of such a subgraph  (Graphs Combin. 32 (2016), 2315--2322). Our proof is self-contained and relies  on alternating path technique.  As an application, we prove the following extension of Hall's theorem. A bigraph $G[X,Y]$ in which each edge has multiplcity at least $m$ has a subgraph $F$ with $g(x)\leq \dg_F(x)\leq f(x)\leq \dg(x)$ for $x\in X$,  $\dg_F(y)\leq m$ for $y\in Y$ if and only if
$\sum_{y\in N_G(S)}f(y)\geq \sum_{x\in S}g(x)$ for  $S\subseteq X$. 
\end{abstract}

   \keywords{$(g,f)$-factors, Ore's Theorem, Hall's Marriage Theorem}
   \date{\today}
   \maketitle
\section{Introduction}  
Factor theory is one of the oldest and most active areas of graph theory \cite{MR2816613}, that  started in the 19th century when Petersen showed that every even regular graph is 2-factorable.  In this note, we are primarily concerned with factors with prescribed degree conditions in bigraphs.

A bigraph $G$ with bipartition $\{X, Y \}$ will be denoted by $G[X, Y ]$, and for $S\subseteq X$, $\overline{S}$ means $X\backslash S$. For a real-valued function $f$ on a domain $D$ and  $A\subseteq D$,  $f(A) :=\sum\nolimits_{a\in A}f(a)$. For a graph $G=(V,E)$,   $u\in V$  and $A\subseteq V$, $\dg_G(u)$, and $\edg_G(uA)$ denote the number of edges incident with $u$,  and the number of edges between $u$ and $A$,  respectively. Let $f,g$ be  integer functions on the vertex set of a graph $G$ such that $0\leq g(x)\leq f(x)$ for all $x$. A {\it $(g,f)$-factor} is a spanning subgraph $F$ of $G$ with the property that $g(x)\leq \dg_F(x)\leq f(x)$ for each $x$.   An {\it $f$-factor} is an $(f,f)$-factor. Ore \cite{MR77920, MR83725} showed that  $G[X,Y]$  has an $f$-factor  if and only if $f(X)=f(Y)$ and
$$f(A)\leq \sum\nolimits_{y\in Y} \min\big\{f(y), \edg_G(y A)\big\} \quad \forall A\subseteq X.$$
Folkman and Fulkerson proved a $(g,f)$-factor theorem for bigraphs \cite{MR268065} which was simplified by Heinrich et al.  (Here, $x\dotdiv y$ means $\max\{0,x-y\}$). 
\begin{theorem} \cite[Theorem 1]{MR1081839}
The bigraph $G[X,Y]$ has a  $(g,f)$-factor if and only if
\begin{align*}
f(A)&\geq  \sum\nolimits_{u\notin A} \Big( g(u) \dotdiv \dg_{G-A}(u) \Big) \quad \forall A\subseteq X\cup Y.
\end{align*}
\end{theorem}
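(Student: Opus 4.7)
The plan is to handle necessity by a direct edge count and sufficiency by an extremal argument: pick a maximum-valid subgraph, and use alternating walks from a deficient vertex either to augment it or to extract a witness set $A$.

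For necessity, given a $(g,f)$-factor $F$ and $A \subseteq X \cup Y$, each $u \notin A$ satisfies $g(u) \leq \dg_F(u) = \dg_{F-A}(u) + \edg_F(u, A) \leq \dg_{G-A}(u) + \edg_F(u, A)$, so $g(u) \dotdiv \dg_{G-A}(u) \leq \edg_F(u, A)$. Summing over $u \notin A$, the right-hand side counts $F$-edges crossing the cut and is at most $\sum_{v \in A} \dg_F(v) \leq f(A)$, which closes this direction.

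For sufficiency, I would choose a subgraph $F \subseteq G$ with $\dg_F(v) \leq f(v)$ for all $v$ minimizing the deficiency $\operatorname{def}(F) := \sum_v (g(v) - \dg_F(v))^+$. Assuming for contradiction $\operatorname{def}(F) > 0$, pick $w$ with $\dg_F(w) < g(w)$, and by symmetry suppose $w \in X$. Next, grow an alternating tree at $w$ whose walks begin with a non-$F$ edge and alternate; let $S \subseteq Y$ denote the set of vertices reached by odd-length such walks and let $T \subseteq X$ consist of $w$ together with the vertices reached by even-length walks. The extremality of $F$ forces (i) $\dg_F(y) = f(y)$ for every $y \in S$ and (ii) $\dg_F(x) \leq g(x)$ for every $x \in T \setminus \{w\}$, since otherwise toggling the alternating walk would produce a valid subgraph of strictly smaller deficiency. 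The definitions of $S$ and $T$ further force (iii) every non-$F$ edge leaving $T$ lands in $S$ and (iv) every $F$-edge incident to $S$ goes to $T$.

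Taking $A = S$ should then deliver the contradiction: (iv) gives $f(S) = \edg_F(S, T)$, while (iii) gives $\dg_{G-S}(u) = \edg_F(u, Y \setminus S)$ for each $u \in T$, so restricting the sum to $u \in T$ yields
\begin{align*}
\sum_{u \notin S}\bigl(g(u) \dotdiv \dg_{G-S}(u)\bigr) \geq g(T) - \sum_{x \in T}\dg_F(x) + f(S) > f(S) = f(A),
\end{align*}
where the strict inequality uses (ii) together with $\dg_F(w) < g(w)$ to conclude $\sum_{x \in T} \dg_F(x) < g(T)$. This violates the hypothesis. The main obstacle is verifying (i) and (ii) cleanly: one must check that toggling the alternating walk preserves $\dg_F \leq f$ at every vertex it touches and \emph{strictly} lowers the total deficiency rather than merely redistributing it between the two endpoints, which is what pins down the structural properties (iii) and (iv) of $S$ and $T$.
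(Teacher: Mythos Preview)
The paper does not actually prove this statement---it is quoted from \cite{MR1081839} as background---so there is no in-paper proof to compare against directly. That said, your argument is correct and is exactly the alternating-path method the paper itself deploys in Section~2 to prove Theorem~\ref{hallgen} (the special case $g|_Y\equiv 0$): take a subgraph respecting the $f$-bounds that minimizes total deficiency, grow from a deficient vertex the set of vertices reachable by alternating paths, and read off a violating set. Your pair $(T,S)$ with witness $A=S$ corresponds to the paper's $(A,B)$; your claims (i)--(iv) correspond to the paper's (a)--(f); and your closing chain of inequalities mirrors the paper's final display. The caveat you flag about toggling is routine and handled just as in the paper: shorten walks to paths so internal degrees are unchanged under symmetric difference, verify the far endpoint stays within its $f$-bound (case (i)) or above its $g$-bound (case (ii)) by hypothesis, and note that $w$ gains exactly one edge so the deficiency strictly drops rather than being redistributed.
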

Recently  Cymer and Kano found another simple criteria.
\begin{theorem} \cite[Theorem 5]{MR3564794} \label{Cymer-Kanogfthm}
The bigraph $G[X,Y]$ has a  $(g,f)$-factor if and only if  the following conditions hold.
\begin{align*}
     g(A)&\leq \sum\nolimits_{y\in Y} \min \Big\{ f(y), \edg_G(yA)\Big\}
\quad &\forall A\subseteq X,\\
 g(B)&\leq \sum\nolimits_{x\in X} \min \Big\{ f(x), \edg_G(xB)\Big\}
\quad &\forall B\subseteq Y.\\
\end{align*}
\end{theorem}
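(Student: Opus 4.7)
My plan is to prove necessity by a direct edge count and derive sufficiency from the Folkman--Fulkerson criterion stated above.

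For necessity, fix a $(g,f)$-factor $F$ of $G[X,Y]$ and $A\subseteq X$. The number of $F$-edges meeting $A$ equals $\dg_F(A)\geq g(A)$, while each $y\in Y$ contributes at most $\min\{\dg_F(y),\edg_G(yA)\}\leq \min\{f(y),\edg_G(yA)\}$ such edges; summing over $Y$ yields the first Cymer--Kano inequality, and the second follows by symmetry.

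For sufficiency, assume both Cymer--Kano inequalities and verify the Folkman--Fulkerson hypothesis for an arbitrary $S\subseteq X\cup Y$. Write $S=S_X\cup S_Y$ with $S_X\subseteq X$, $S_Y\subseteq Y$, and set $A=X\setminus S_X$, $B=Y\setminus S_Y$. Since $\dg_{G-S}(x)=\edg_G(xB)$ for $x\in A$ and $\dg_{G-S}(y)=\edg_G(yA)$ for $y\in B$, the Folkman--Fulkerson right-hand side becomes $\sum_{x\in A}(g(x)\dotdiv \edg_G(xB))+\sum_{y\in B}(g(y)\dotdiv \edg_G(yA))$. Introduce the \emph{deficient} subsets $A'=\{x\in A:g(x)>\edg_G(xB)\}$ and $B'=\{y\in B:g(y)>\edg_G(yA)\}$; on their complements the $\dotdiv$ contributions vanish, so the expression collapses to $g(A')+g(B')-\edg_G(A',B)-\edg_G(A,B')$. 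Now apply the first Cymer--Kano inequality to $A'$ and split $\sum_{y\in Y}\min\{f(y),\edg_G(yA')\}$ across $Y=S_Y\cup B$: estimate $\min\{f(y),\edg_G(yA')\}$ by $f(y)$ on $S_Y$ and by $\edg_G(yA')$ on $B$ to obtain $g(A')\leq f(S_Y)+\edg_G(A',B)$. Symmetrically, $g(B')\leq f(S_X)+\edg_G(A,B')$. Adding these produces the desired bound $f(S_X)+f(S_Y)\geq g(A')+g(B')-\edg_G(A',B)-\edg_G(A,B')$, after which the Folkman--Fulkerson theorem finishes the job.

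The key insight, and the step requiring the most care, is to test the Cymer--Kano hypothesis on the deficient subsets $A',B'$ rather than on $A,B$ themselves. A naive application to $A$ and $B$ leaves a nonnegative residue of the form $\sum(\min\{f,\cdot\}-\min\{g,\cdot\})$ that does not cancel in general. Restricting to the deficient subsets aligns the $\dotdiv$ terms on the Folkman--Fulkerson side with the bipartite edge counts on the Cymer--Kano side exactly, so that the slack disappears and the inequality becomes tight enough to close the argument. If this algebraic route resists, a backup strategy would be the alternating-path argument alluded to in the abstract: begin with an arbitrary subgraph and repeatedly augment along paths that either raise an underloaded $x\in X$ or lower an overloaded $y\in Y$, extracting a violation of the Cymer--Kano condition from the reachability set of any terminal stuck configuration.
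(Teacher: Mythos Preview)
Your argument is correct. The paper does not supply its own proof of this statement; it is cited from Cymer and Kano, and the paper explicitly remarks that their proof goes through the Folkman--Fulkerson criterion, which is exactly the route you take.
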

Theorem \ref{Cymer-Kanogfthm} has been particularly useful in solving various generalized Sudoku puzzles \cite{RyserrhoBah, RyserrhosymBah}; Solving some of these puzzles can be reduced to finding $(g,f)$-factors with the additional property that  $g(y)=0$ for $y\in Y$ in a bigraph $G[X,Y]$. Motivated by solving such  problems, we establish the following new criterion for a bigraph to have a factor with prescribed degrees. 
\begin{theorem} \label{hallgen}
A bigraph $G[X,Y]$ has a  $(g,f)$-factor  with $g(y)=0$ for $y\in Y$  if and only if
\begin{equation} \label{refinedgfthm}
    f(B)\geq  \sum\nolimits_{x\in A} \Big( g(x) \dotdiv \edg_G(x \overline{B}) \Big) \quad \forall A\subseteq X, B\subseteq Y.
\end{equation}
\end{theorem}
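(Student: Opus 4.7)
For necessity, fix a $(g,f)$-factor $F$ and $A\subseteq X$, $B\subseteq Y$. For every $x\in A$, $\edg_F(x,B)=\dg_F(x)-\edg_F(x,\overline B)\geq g(x)-\edg_G(x,\overline B)$, and also $\edg_F(x,B)\geq 0$, so $\edg_F(x,B)\geq g(x)\dotdiv \edg_G(x,\overline B)$. Summing over $A$ and bounding $\sum_{x\in A}\edg_F(x,B)\leq \sum_{y\in B}\dg_F(y)\leq f(B)$ gives \eqref{refinedgfthm}.

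For sufficiency, I would combine extremality with alternating-path augmentation. Among all spanning subgraphs $F\subseteq G$ satisfying $\dg_F(v)\leq f(v)$ for every $v\in X\cup Y$ (the empty subgraph qualifies), pick one maximizing $\Phi(F):=\sum_{x\in X}\min\{g(x),\dg_F(x)\}$. If $\Phi(F)=g(X)$, then $F$ is the desired factor. Otherwise pick $x^*\in X$ with $\dg_F(x^*)<g(x^*)$; applying \eqref{refinedgfthm} with $A=\{x^*\}$, $B=\emptyset$ yields $g(x^*)\leq \dg_G(x^*)$, so $x^*$ has a non-$F$-edge to work with.

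Let $B$ be the set of $y\in Y$ reachable from $x^*$ along an alternating path whose edges are non-$F$, $F$, non-$F$, $\ldots$ with first and last edges both non-$F$, and let $A$ be $\{x^*\}$ together with those $x\in X$ reached by such a path ending instead with an $F$-edge. (A standard bipartite shortest-walk reduction ensures these sets are witnessed by paths, not merely walks.) Flipping $F$ along the path reaching any target $v$ raises $\dg_F(x^*)$ by $1$, leaves all intermediate degrees unchanged, and shifts $\dg_F(v)$ by $+1$ (if $v\in B$) or by $-1$ (if $v\in A\setminus\{x^*\}$). The extremality of $F$ therefore forces (i) $\dg_F(y)=f(y)$ for every $y\in B$, and (ii) $\dg_F(x)\leq g(x)$ for every $x\in A$, since otherwise the relevant flip would strictly increase $\Phi$. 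By construction, every non-$F$-edge of $G$ incident to $A$ has its other end in $B$, and every $F$-edge incident to $B$ has its other end in $A$.

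The endgame is a direct contradiction. For $x\in A$, (ii) combined with the closure property gives $\edg_G(x,\overline B)=\edg_F(x,\overline B)\leq \dg_F(x)\leq g(x)$, so $g(x)\dotdiv \edg_G(x,\overline B)=g(x)-\edg_G(x,\overline B)$. Double counting $F$-edges between $A$ and $B$ then gives
\[
f(B)=\sum_{y\in B}\dg_F(y)=\sum_{x\in A}\edg_F(x,B)=\sum_{x\in A}\bigl(\dg_F(x)-\edg_G(x,\overline B)\bigr)<\sum_{x\in A}\bigl(g(x)\dotdiv \edg_G(x,\overline B)\bigr),
\]
where the strict inequality uses $\dg_F(x^*)<g(x^*)$, contradicting \eqref{refinedgfthm}. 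The main obstacle is setting up the alternating-path closure so that a single extremal $F$ simultaneously blocks both the ``extend'' augmentation (which requires (i)) and the ``shift'' augmentation (which requires (ii)) on the same pair $(A,B)$; once that is arranged, the endgame sum is routine bookkeeping.
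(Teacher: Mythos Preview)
Your proof is correct and follows essentially the same approach as the paper: choose an extremal $(0,f)$-factor (maximizing your $\Phi$ is the same as minimizing the paper's deficiency $\delta=\sum_{x\in X}(g(x)\dotdiv\dg_F(x))$), grow an alternating-path closure from the deficient side, and read off a violating pair $(A,B)$ via the same double count. The only cosmetic differences are that the paper launches its nice paths from the entire deficient set $R$ rather than a single $x^*$, and it records the slightly sharper $\dg_F(x)=g(x)$ on $A\setminus R$ where you use $\dg_F(x)\le g(x)$ on $A$; neither affects the argument.
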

While  Theorem \ref{Cymer-Kanogfthm} relies on Folkman-Fulkerson's $(g,f)$-factor theorem, 
our proof is self-contained and relies on alternating path technique \cite{MR1081839}. Before we prove our main result, we provide the following corollary. Here, $N_G(S)$ is the neighborhood of $S$ in $G$. 
\begin{corollary} \label{hallgencor}
A bigraph $G[X,Y]$ in which the mutiplicity of each edge is at least $m$, has a  $(g,f)$-factor  with $f(y)\leq m, g(y)=0$ for $y\in Y$  if and only if
\begin{equation} \label{refinedgfthmcor}
    f(N_G(S))\geq  g(S) \quad \forall S\subseteq X.
\end{equation}
\end{corollary}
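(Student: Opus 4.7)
\medskip

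\noindent\textbf{Proof proposal for Corollary \ref{hallgencor}.}
The plan is to derive both directions from Theorem \ref{hallgen}. The easy direction is necessity: if such a factor $F$ exists and $S\subseteq X$, then every $F$-edge incident with $S$ lands in $N_G(S)$, so
$$g(S)\leq \sum\nolimits_{x\in S}\dg_F(x) = \edg_F(S,N_G(S))\leq \sum\nolimits_{y\in N_G(S)}\dg_F(y)\leq f(N_G(S)).$$

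For sufficiency, the strategy is to show that \eqref{refinedgfthmcor} implies \eqref{refinedgfthm}. Fix $A\subseteq X$ and $B\subseteq Y$, and restrict the outer sum to those vertices actually contributing, i.e.\ let
$$S = \bigl\{x\in A : g(x)>\edg_G(x\overline{B})\bigr\},\qquad \text{so that}\qquad \sum\nolimits_{x\in A}\bigl(g(x)\dotdiv\edg_G(x\overline{B})\bigr)=g(S)-\sum\nolimits_{x\in S}\edg_G(x\overline{B}).$$
The goal is then to bound this quantity by $f(B)$. Write $N_G(S)$ as the disjoint union $N_B\cup N_{\overline{B}}$, where $N_B=N_G(S)\cap B$ and $N_{\overline{B}}=N_G(S)\cap \overline{B}$.

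The key observation, which crucially uses the multiplicity hypothesis together with $f\leq m$ on $Y$, is that edges from $S$ into $\overline{B}$ already account for all the $f$-capacity of $N_{\overline B}$. Indeed, for every $y\in N_{\overline{B}}$ the vertex $y$ has at least one $G$-neighbor in $S$, so by the multiplicity assumption $\edg_G(yS)\geq m\geq f(y)$, giving
$$\sum\nolimits_{x\in S}\edg_G(x\overline{B})=\sum\nolimits_{y\in N_{\overline{B}}}\edg_G(yS)\geq m\,|N_{\overline{B}}|\geq f(N_{\overline{B}}).$$
Combining this with the hypothesis $g(S)\leq f(N_G(S))=f(N_B)+f(N_{\overline{B}})$ and $f(N_B)\leq f(B)$ yields
$$g(S)-\sum\nolimits_{x\in S}\edg_G(x\overline{B})\leq f(B)+f(N_{\overline{B}})-f(N_{\overline{B}})=f(B),$$
which is exactly \eqref{refinedgfthm}. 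Theorem \ref{hallgen} then delivers the desired $(g,f)$-factor.

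The main obstacle I anticipate is a subtle one: justifying the step $\edg_G(yS)\geq m$ requires knowing that each $y\in N_{\overline B}$ actually has a neighbor in $S$, and separately that $S$ itself is ``non-degenerate'' in the sense that each of its vertices has a neighbor in $B$ (otherwise a singleton $\{x\}$ with $g(x)>\dg_G(x)$ would contradict \eqref{refinedgfthmcor} via the identity $f(N_G(x))\leq m|N_G(x)|\leq \dg_G(x)$). Both facts follow cleanly from \eqref{refinedgfthmcor} and the multiplicity hypothesis, but keeping the bookkeeping straight between $A$, $S$, $N_B$, and $N_{\overline B}$ is where the argument must be written with care.
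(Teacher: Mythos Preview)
Your proof is correct and follows essentially the same approach as the paper's: both reduce to Theorem~\ref{hallgen}, isolate the subset $S\subseteq A$ of vertices that actually contribute to the $\dotdiv$-sum, split $N_G(S)$ across $B$ and $\overline{B}$, and use the multiplicity hypothesis together with $f\leq m$ on $Y$ to absorb $f(N_G(S)\cap\overline B)$ into $\edg_G(S\overline B)$. Your closing worries are unfounded and can be deleted: that each $y\in N_{\overline B}$ has a neighbor in $S$ is immediate from the definition $N_{\overline B}=N_G(S)\cap\overline B$, and no separate ``non-degeneracy'' of $S$ is required anywhere in the chain of inequalities.
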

\begin{proof}
By Theorem \ref{hallgen},  $G[X,Y]$ has a  $(g,f)$-factor  with $g(y)=0$ for $y\in Y$  if and only if 
\begin{equation} \label{refinedgfthmGMT}
   f(B)\geq  \sum\nolimits_{x\in A} \Big( g(x) \dotdiv \edg_G(x \overline B) \Big) \quad \forall A\subseteq X, B\subseteq Y.
\end{equation}
To complete the proof, we show that \eqref{refinedgfthmcor} and \eqref{refinedgfthmGMT} are equivalent. First, let us assume that \eqref{refinedgfthmGMT} holds, and let $A=S\subseteq X, B=N_G(S)\subseteq Y$. We have
\begin{equation} 
    f(N_G(S))\geq  \sum\nolimits_{x\in S} \Big( g(x) \dotdiv \edg_G(x \overline {N_G(S)}) \Big)=\sum\nolimits_{x\in S}  g(x)=g(S),
\end{equation}
and so \eqref{refinedgfthmcor} is satisfied. Conversely, assume that \eqref{refinedgfthmcor} holds, and let $A\subseteq X, B\subseteq Y$. Let
$$
S=\{x\in A \ |\ g(x)\geq  \edg_G(x \overline B) \}. 
$$
If we  show that $g(S)\leq f(B)+\edg_G(S \overline B)$, then we are done. 
We have
$$
g(S)\leq f(N_G(S))=f(N_G(S)\cap B)+ f(N_G(S)\backslash \overline B)\leq f(B) + m |N_G(S)\backslash B| \leq f(B) + \edg_G(S \overline B).
$$
\end{proof}
\begin{remark} \textup{
The case $m=1$ of Corollary \ref{hallgencor} was previously settled in \cite[Theorem 7]{MR3564794}. Observe that the case $m=1$, $f(x)=g(x)=1$ for $x\in X$ of Corollary \ref{hallgencor}  corresponds to the famous Hall's marriage theorem. 
}\end{remark}
\section{Proof of Theorem \ref{hallgen}}
To prove the necessity, suppose that $G$ has a $(g,f)$-factor $F$, and let $A\subseteq X, B\subseteq Y$. Define $C=\{x\in A \ |\ g(x) > \edg_G(x\overline{B})\}$. If $C=\emptyset$, then \eqref{refinedgfthm} is trivial. Otherwise, let $x\in C$. There must be at least $g(x) - \edg_G(x\overline{B})$ edges in $F$ joining $x$ to vertices in $B$. Hence, 
\begin{equation} 
\sum\nolimits_{x\in C} \Big( g(x) - \edg_G(x\overline{B}) \Big) \leq \edg_F(C B) \leq  \edg_F(A B) \leq  \sum\nolimits_{y\in B}\dg_F(y) \leq   f(B).  
\end{equation}
To prove the sufficiency, suppose that \eqref{refinedgfthm} holds. Let $F$ be a $(0,f)$-factor that minimizes $\delta:=\sum\nolimits_{x\in X} \Big( g(x) \dotdiv \dg_{F}(x) \Big)$. If $\delta=0$, then $F$ is a $(g,f)$-factor and we are done. So let us assume that $\delta  > 0$, and so 
\begin{equation} \label{Rdef}
R:=\{x\in X \ |\ g(x)>\dg_F(x)\}\neq \emptyset.
\end{equation}
To complete the proof, we find sets $A\subseteq X, B\subseteq Y$ such that \eqref{refinedgfthm} fails. A path (possibly of length zero) is {\it nice} if it starts with a vertex in $R$ and an edge in $E(G)\backslash E(F)$ and whose edges are alternately in $G-F$ and $F$. 
Let $W$ be the  set  of terminal vertices of all nice paths. Let $A=R\cup S$ where  $S:=(X\backslash R)\cap W$, and let  $B=Y\cap W$. We claim that
\begin{enumerate}
    \item [(a)] If $e\in E(F)$ with $e=xy$ and $y\in B$, then $x\in A$. 
    \item [(b)] If $e\in E(G)\backslash E(F)$ with $e=xy$ and $x\in A$, then $y\in B$.
    \item [(c)] $\edg_G(x\overline{B})=\dg_F(x)-e_G(xB)$ for $x\in A$.
    \item [(d)] $\dg_F(y)=f(y)$ for $y\in B$.
    \item [(e)] $\dg_F(x)=g(x)$ for $x\in S$.
    \item [(f)] $\dg_{G-B}(x)\leq  g(x)$ for $x\in A$.
\end{enumerate}
Observe that (c) is an immediate consequence of (b), and \eqref{Rdef} and (c) imply (f). To prove (a) and (b), let $e=xy$. If $y\in B$, there is  a nice path $P$ ending at $y$ (whose last edge is not in $F$), and so if $e\in E(F)$ and $x\notin R$, then $P+ex$ is a nice path ending at $x$, and consequently,  $x\in S$. Similarly, if $x\in A$,  there is  a nice path $P$ ending at $x$ (whose last edge is  in $F$), and so if $e\in E(G)\backslash E(F)$, then $P+ey$ is a nice path ending at $y$, and consequently,  $y\in B$. 
To prove (d), let $y\in B$. There is a nice path $P$ ending at $y$. If $\dg_F(y)<f(y)$, then since the last edge of $P$ is in $E(G)\backslash E(F)$, the $(0,f)$-factor $F'$  with $E(F')=E(F)\Delta E(P)$ contradicts the minimality of $\delta$ (We use $\Delta$ for the symmetric difference). Similarly, to prove (e), let $x\in S$. There is a nice path $P$ ending at $x$. If $\dg_F(x)>g(x)$, then  since the last edge of $P$ is in $E(F)$, the $(0,f)$-factor $F'$  with $E(F')=E(F)\Delta E(P)$ contradicts the minimality of $\delta$. The following completes the proof.
 \begin{align*}
 \sum\nolimits_{x\in A} \Big( g(x) \dotdiv \edg_G(x\overline{B}) \Big)\mathop {\longeq}  \limits^{\text{(f)}}&\sum\nolimits_{x\in A} \Big( g(x) - \edg_G(x\overline{B}) \Big)\\
 \mathop {\longeq} \limits^{\text{(c)}}&\sum\nolimits_{x\in A} \Big( g(x) - \dg_F(x)+\edg_G(xB) \Big)\\
  \mathop {\longeq} \limits^{\text{(e)}} & \sum\nolimits_{x\in R} \Big( g(x) - \dg_{F}(x) \Big) +  \edg_G(AB)\\
    \mathop  {\quad >\quad }\limits^{\eqref{Rdef}}  &  \edg_G(AB)\\
    \mathop {\longeq} \limits^{\text{(a)}}&  \sum\nolimits_{y\in B}\dg_F(y)\\
    \mathop {\longeq} \limits^{\text{(d)}}& f(B).
\end{align*}
This completes the proof.

\end{document}